\theoremstyle{plain}
\newtheorem{thm}{Theorem}
\newtheorem{lem}[thm]{Lemma}
\newtheorem{alg}[thm]{Algorithm}
\theoremstyle{definition}
\newtheorem{defi}[thm]{Definition}
\theoremstyle{remark}
\begin{document}

\title{How to use the Fast Fourier Transform in Large Finite Fields}

\author{Petur Birgir Petersen}

\date{Rungsted Kyst 2009}

\maketitle

\begin{abstract}

The article contents suggestions on how to perform the Fast Fourier
Transform over Large Finite Fields. The technique is to use the fact
that the multiplicative groups of specific prime fields are
surprisingly composite. \footnote{MSC 11, 42, 68 and 94.
\textbf{Keywords}: Finite Fields, Discrete Fourier Transform (DFT),
Fast Fourier Transform (FFT), Coding and Decoding of Reed--Solomon
Codes.}

\end{abstract}

\section{Introduction}
In 2003 Gao published the article \emph{A New Algorithm for Decoding
Reed -- Solomon Codes} \cite{Gao}. Gaos algorithm can be executed
through out with the use of the Discrete Fourier Transform (DFT) in
a Finite Field. The algorithm will of course be much faster using
the Fast Fourier Transformation (FFT). The coding and decoding of
Reed--Solomon Codes is often performed over Finite Fields
$\mathbb{F}_{q}$ of order $q=2^m$, where $m \in \mathbb{N}$. In 2006
and 2007 Truong, Chen, Wang, Chang \& Reed published the article
\cite{Truong}, \cite{Truong er} : \emph{Fast prime factor, discrete
Fourier algorithms over $GF(2^m)$, for $8 \leq m \leq 10$}, which is
is a sort of follow up on another article \cite{Reed} that treats
the cases $n=4, 5, 6, 8$. These results are very important, but
$2^{10} = 1024$, and for instance digital TV signals use much bigger
files, say in the order of $2^{20} \approx 10^6$. For large files
like this my suggestion is to augment the file with an extra bit,
and let the FFT be performed over a prime field $\mathbb{F}_{p}$.
For certain primes this can be performed efficient by an algorithm
based on the Cooley--Tukey algorithm \cite{Cooley} from 1965. In the
next section the algorithm will be explained, and the last section
contents a variety of suggestions on well--suited primes $p$, where
the FFT over $\mathbb{F}_{p}$ will be specially efficient.

\section{Fast Fourier Transformation over Finite Fields}

\begin{defi} \label{DFT}
Let $\omega$ be an element in $\mathbb{F}_{p^{m}}$ of order $n$
where $n \mid p^{m}-1$. The Discrete Fourier Transform (DFT) of the
$n$-tuple $ \underline{v} = (v_{0}, v_{1}, ..., v_{n-1}) \in
\mathbb{F}_{p^{m}}^{n} $ is the $n$-tuple $\underline{V}$ with
components given by

\begin{equation}\label{V}
    V_{j}=\sum_{i=0}^{n-1} {\omega^{ij} v_{i}}, \quad  j = 0,1, ..., n-1
\end{equation}
\end{defi}

The Inverse Discrete Fourier Transform (IDFT) of the $n$-tuple $
\underline{V}\in \mathbb{F}_{p^{m}}^{n} $ is the $n$-tuple

\begin{equation*}
    (v_{i})=((n^{-1} \mathrm{mod} \ p) \sum_{i=0}^{n-1} {\omega^{-ij} V_{j}}), \quad  i = 0,1, ..., n-1
\end{equation*}

For a proof see e.g. \cite{Blahut}. Notice that the IDFT apart from
the factor $(n^{-1} \mathrm{mod} \ p)$ also is a DFT.

Now assume that $n \mid p^{m}-1$ is composite: $n=r_{1} r_{2}$. The
indices in definition \ref{DFT} can be rewritten like this:
\[ j= j_{1}r_{1}+j_{0}, \quad j_{0}=0,1,\ldots, r_{1}-1, \quad , j_{1} =
0,1, \ldots , r_{2}-1 \]

\[ i= i_{1}r_{2}+i_{0}, \quad i_{0}=0,1,\ldots, r_{2}-1, \quad ,
i_{1} = 0,1, \ldots , r_{1}-1 \].

Replacing $v_{i}$ by $x_{0}(i)$,  equation ( \ref{V}) now can be
rewritten as:

\[
V(j_{1},j_{0})= \sum_{i_{0}=0}^{r_{2}-1}(\sum_{i_{1}}^{r_{1}-1}
x_{0}(i_{1},i_{0}) \omega^{i_{1} r_{2} j} )\omega^{i_{0}j}
\]

Since $\omega^{n} = \omega^{r_{1} r_{2}} =1$, then
$\omega^{i_{1}r_{2}j} = \omega^{i_{1}r_{2}j_{0}}$.

Set
\[
x_{1}(j_{0},i_{0})=\sum_{i_{1}=0}^{r_{1}-1} x_{0}(i_{1},i_{0})
\omega^{j_{0} i_{1} r_{2} }
\]

Then

\[
V(j_{1},j_{0})= \sum_{i_{0}=0}^{r_{2}-1} x_{1}(j_{0},i_{0})
\omega^{i_{1} r_{2} j} )\omega^{(j_{1}r_{1}+j_{0})i_{0}}
\]
It will require $nr_{1}$ multiplications and $n(r_{1}-1)$ additions
to calculate $x_{1}$ for all $(j_0, i_0)$ and $nr_{2}$
multiplications and $n(r_{2}-1)$ additions to calculate
\underline{V} from $x_1$. This will give a total of $n(r_{1}+r_{2})$
multiplications and $n(r_{1}+r_{2} - 2)$ additions in
$\mathbb{F}_{p^m}$. For $n\geq 4$ this is faster than the DFT which
requires $n^2$ multiplications and $n(n-1)$ additions in
$\mathbb{F}_{p^m}$.

More generally, if $n=r_{1} r_{2} \cdots r_{s}$ where $r_{1}, r_{2},
\ldots ,  r_{s} \in \mathbb{N}$, then the indices \textit{j} and
\textit{i} can be expressed like this:

\[
j= j_{s-1}r_{1} r_{2} \cdots r_{s-1} + j_{s-2}r_{1} r_{2} \cdots
r_{s-2} + \ldots + j_{1}r_{1} + j_{0}
\]

where

\[
j_{k-1} = 0,1, \ldots , r_{k} - 1, \quad 1 \leq k \leq s
\]

and

\[
i= i_{s-1}r_{2} r_{3} \cdots r_{s} + i_{s-2}r_{3} r_{4} \cdots r_{s}
+ \ldots + i_{1}r_{s} + i_{0}
\]

where

\[
i_{k-1} = 0,1, \ldots , r_{s-(k-1)} - 1, \quad 1 \leq k \leq s
\]

Now equation (\ref{V}) can be rewritten \cite{Bergland}, by setting
$v_{i}=x_{0}(i_{s-1},i_{s-2}, \ldots , i_1 , i_0 )$ and $V_{j} = V
(j_{s-1},j_{s-2}, \ldots , j_1 , j_0 )$, as

\begin{equation*}
V (j_{s-1},j_{s-2}, \ldots , j_1 , j_0 )=\sum_{i_{0}=0}^{r_{s}-1}
\sum_{i_{1}=0}^{r_{s-1}-1} \cdots
\sum_{i_{s-1}=0}^{r_{1}-1}x_{0}(i_{s-1},i_{s-2}, \ldots , i_1 , i_0
) {\omega^{ij}}
\end{equation*}

Using the fact that $ \omega^{r_1 r_2 \cdots r_s} = \omega^n = 1$
this expression can be calculated by \textit{s} recursive equations:
\begin{equation}\label{recursive}
x_{1}(j_{0},i_{s-2},i_{s-3}, \ldots , i_1 , i_0
)=\sum_{i_{s-1}=0}^{r_{1}-1}x_{0}(i_{s-1},i_{s-2}, \ldots , i_1 ,
i_0 ) \omega^{j_0 i_{s-1} r_2 \cdots r_s}
\end{equation}
\[
x_{k}(j_{0},j_{1}, \ldots , j_{k-1}, i_{s-k-1}, \ldots , i_1 , i_0
)=
\]
\[
\sum_{i_{s-k}=0}^{r_{k}-1}x_{k-1}(j_0, j_1, \ldots , j_{k-2},
i_{s-k}, \ldots , i_1 , i_0 ) \omega^{(j_{k-1}r_1 r_2  \cdots
r_{k-1}+j_{k-2}r_1 r_2 \cdots r_{k-2} + \cdots + j_0) i_{s-k}
r_{k+1}r_{k+2} \cdots r_s}
\]
for $k= 2,3, \ldots, s-1$
\[
x_{s}(j_{0},j_{1}, \ldots , j_{s-1})=
\sum_{i_{0}=0}^{r_{s}-1}x_{s-1}(j_0, j_1, \ldots, j_{s-2}, i_0 )
\omega^{(j_{s-1}r_1 r_2 \cdots r_{s-1}+j_{s-2}r_1 r_2 \cdots r_{s-2}
+ \cdots + j_0) i_{0}}
\]

Now the final output $x_{s}(j_{0},j_{1}, \ldots , j_{s-1})= V
(j_{s-1},j_{s-2}, \ldots , j_1 , j_0 )=V_j$. \\
This algorithm will require a total of $n(r_1 + r_2 + \cdots + r_s)$
multiplications and $n(r_1 + r_2 + \cdots + r_s - s)$ additions in
$\mathbb{F}_{p^m}$. Here we did include the multiplications by
$\omega^0 = 1$.
\\ For $n=r^\nu$, the algorithm requires a total of $n \nu r =
\frac{r}{\log _{2}(r)}n \log_{2}(n)$ multiplikations. The factor
$\frac{r}{\log _{2}(r)}$ achieves its minimum for $r=3$, but $r=2$
and $r=4$ is still better because of the possibility of reducing the
numbers of multiplications using:

\begin{lem}\label{even}
Let \emph{$\omega \in \mathbb{F}_{p^m}$} be of order $n$. If
\emph{$n$} is even and \emph{$t \in \mathbb{Z}$} , then
\emph{$\omega ^{\frac{n}{2}+t} = - \omega^{t}$}
\end{lem}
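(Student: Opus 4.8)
The plan is to first establish the special case $\omega^{n/2} = -1$, from which the statement for general $t$ follows in one line. Since $\omega$ has order $n$ and $n$ is even, set $k = n/2 \in \mathbb{N}$. Then $(\omega^{k})^{2} = \omega^{n} = 1$, so $\omega^{k}$ is a root of the polynomial $X^{2} - 1 \in \mathbb{F}_{p^{m}}[X]$. Factoring $X^{2} - 1 = (X-1)(X+1)$ and using that $\mathbb{F}_{p^{m}}$ is a field (in particular an integral domain), the only possibilities are $\omega^{k} = 1$ or $\omega^{k} = -1$.

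Next I would argue that these two candidate values are genuinely distinct and then decide which one occurs. For distinctness: the order $n$ of $\omega$ divides $|\mathbb{F}_{p^{m}}^{\times}| = p^{m}-1$, and since $n$ is even, $p^{m}-1$ is even, which forces $p$ to be odd; hence $\mathrm{char}(\mathbb{F}_{p^{m}}) \neq 2$ and $1 \neq -1$. To decide which value is taken, I would invoke the minimality built into the definition of order: $n$ is the \emph{least} positive integer with $\omega^{n}=1$, and $0 < k < n$, so $\omega^{k} \neq 1$. Therefore $\omega^{n/2} = \omega^{k} = -1$.

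Finally, for arbitrary $t \in \mathbb{Z}$ (negative exponents being meaningful since $\omega$ is a unit in $\mathbb{F}_{p^{m}}$), I would simply write $\omega^{n/2 + t} = \omega^{n/2}\,\omega^{t} = -\omega^{t}$, which is the assertion. I do not anticipate any real obstacle; the only step that needs a moment's attention is ruling out $\omega^{k} = 1$, and this is precisely the place where the hypothesis that $n$ itself is even (so that $k = n/2$ is a positive integer strictly smaller than the order) is used.
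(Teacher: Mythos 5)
Your proof is correct and follows essentially the same route as the paper: factor $\omega^{n}-1=(\omega^{n/2}-1)(\omega^{n/2}+1)$, use that $\mathbb{F}_{p^{m}}$ is an integral domain, and rule out $\omega^{n/2}=1$ by the minimality in the definition of order. Your additional observations (that $1\neq-1$ because $p$ must be odd, and the final one-line extension to general $t$) are harmless refinements of the same argument.
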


\begin{proof}
The order of $\omega$ is $n \mid p^m -1$. Hence $\omega ^n = 1$. So
$0=\omega^n-1=(\omega^{\frac{n}{2}}-1)(\omega^{\frac{n}{2}}+1)$.
Since ord($\omega$)$=n$ then $\omega^{\frac{n}{2}} \neq 1$ and hence
$\omega^{\frac{n}{2}} +1 =0$
\end{proof}

For $n=2^{\mu}$ use of the lemma will reduce the number of
multiplications in $\mathbb{F}_{2^m}$ by $50 \%$. This is for $s
\geq 3$ caused by the possibility of rearranging the recursive
equations (\ref{recursive}) in a slightly different way (in
principle due to \cite{Gentleman}):

\pagebreak

\begin{equation}\label{twiddle}
y_{1}(j_{0},i_{s-2},i_{s-3}, \ldots , i_1 , i_0
)=(\sum_{i_{s-1}=0}^{r_{1}-1}x_{0}(i_{s-1},i_{s-2}, \ldots , i_1 ,
i_0 ) \omega^{j_0 i_{s-1}(n/r_1)})\omega^{j_0 i_{s-2}r_3 \cdots r_s}
\end{equation}
\[
y_{k}(j_{0},j_{1}, \ldots , j_{k-1}, i_{s-k-1}, \ldots , i_1 , i_0
)=
\]
\[
(\sum_{i_{s-k}=0}^{r_{k}-1}y_{k-1}(j_0, j_1, \ldots , j_{k-2},
i_{s-k}, \ldots , i_1 , i_0 ) \omega^{(j_{k-1}i_{s-k}(n/r_{k}))})
\omega^{(j_{k-1} r_1 r_2 \cdots r_{k-1} + \cdots + j_1 r_1 +
j_0)i_{s-k-1}r_{k+2} \cdots r_s}
\]
for $k= 2,3, \ldots, s-1$ when $s \geq 4$, else go to the next
equation.

\[
y_{s-1}(j_{0},j_{1}, \ldots , j_{s-2}, i_{0})=
\]
\[
(\sum_{i_{1}=0}^{r_{s-1}-1}y_{s-2}(j_0, j_1, \ldots , j_{s-3}, i_1 ,
i_0 ) \omega^{(j_{s-2}i_{1}(n/r_{s-1}))}) \omega^{(j_{s-2} r_1 r_2
\cdots r_{s-2} + \cdots + j_1 r_1 + j_0)i_{0}}
\]

\[
y_{s}(j_{0},j_{1}, \ldots , j_{s-1})=
\sum_{i_{0}=0}^{r_{s}-1}y_{s-1}(j_0, j_1, \ldots, j_{s-2}, i_0 )
\omega^{(j_{s-1}i_{0}(n/r_s))}
\]
Here the final output $y_{s}(j_{0},j_{1}, \ldots , j_{s-1})= V
(j_{s-1},j_{s-2}, \ldots , j_1 , j_0 )=V_j$. \\

For $\ell = 1,2, \cdots s$,  a  $r_{\ell}$ -- point Fourier
Transform is included in step number $\ell$ of the algorithm. Among
these, each two point Fourier Transform does not require any
multiplication because
$\omega^0 = 1$ and $\omega^{\frac{n}{2}}=-\omega$. \\

Within the original Cooley -- Tukey algorithm, which is executed
over the field of complex numbers, it is possible to do additional
tricks by looking at the real and imaginary part of a number. These
tricks can not be transferred to a finite field. \\ \\
The overall conclusion must be that the algorithm sketched above
will be relatively most efficient if the total number of points
$n=r_{1}^{\nu_{1}}r_{2}^{\nu_{2}} \cdots r_{u}^{\nu_{u}}$ is
factored in factors as small as possible.

\section{Concrete suggestions}
Versions of the Cooley -- Tukey algorithm are not very efficient
over binary fields $\mathbb{F}_{2^m}$ where $m \in \mathbb{N}$. In
most of these cases the order $2^m - 1$ of the multiplicative group
is not highly composite. For instance the order of the fields
examined in the recent article \cite{Truong}, \cite{Truong er} are
$2^8, 2^9$ and $2^{10}$ where $2^8-1 = 3 \times 5 \times 17, \quad
2^9 - 1 = 7 \times 73$ and $2^{10} -1 = 3 \times 11 \times 31$ It
could also be mentioned that $2^7-1$ is a prime. The algorithm
presented in \cite{Truong}, \cite{Truong er} is a Prime Factor
Algorithm which as such takes advantage of the fact that all the
prime factors in $2^m -1 $ are coprimes for $8 \leq n \leq 10 $.
This will also be the case for a great deal of the numbers $2^m -1$
for bigger $m$, but some of the prime factors tend to be bigger too.
For example $2^{15} - 1 = 7 \times 31 \times 151, \quad 2^{16} - 1 =
3 \times 5 \times 17 \times 257, \quad 2^{17} -1$ is prime, $2^{18}
-1 = 3^3 \times 7 \times 19 \times 73$ and  $2^{19}
-1$ is prime. \\
It is obvious, as mentioned, that versions of the Cooley -- Tukey
algorithm will not be very efficient in finite fields like these. It
will in these cases be much more efficient to avoid using all $m$
bits in its full content, and use algorithm (\ref{recursive}) or
(\ref{twiddle}) over a prime field instead. Here are two examples: \\

Instead of using 17 bits to create the field $\mathbb{F}_{2^{17}}$
with a multiplicative af order $131071$ which is a prime, then use
18 bits to create the prime field $\mathbb{F}_{147457}$ which
multiplicative group is of the order $2^{14} \times 3^2$. \\
Or instead of using $19$ bits to create the field
$\mathbb{F}_{2^{19}}$ with a multiplicative group of the order
$524287$ which is also a prime, then use an extra bit to create the
field $\mathbb{F}_{786433}$ which multiplicative group is of the
order $2^{18} \times 3$. \\
The orders of the multiplicative groups of the prime fields given in
the two examples are highly composite, and the algorithm
(\ref{twiddle}), which is based on the Cooley -- Tukey algorithm,
will be very efficient here: In the case $\mathbb{F}_{147457}$, DFT
uses $(2^{14} \times 3^2)^2 \approx 2 \times 10^{10}$
multiplications and the FFT (\ref{twiddle}) suggested here will
require $2^{14} \times 3^2 \times (14 \times 1 + 2 \times 3) \approx
3 \times 10^6$ multiplications, which is $\frac{2^{14} \times
3^2}{14 \times 1 + 2 \times 3} \approx 7 \times 10^3$ times faster
than the DFT. Here we have used lemma \ref{even} to reduce the
number of multiplications. The multiplication in it self is also
easy: Just multiplication modulo the prime, which in the example is
$147457$. The estimate is roughly the same as regards the additions:
The DFT over $\mathbb{F}_{147457}$ requires $147457 \times (147457 -
1) \approx 2 \times 10^{10}$ additions and our FFT (\ref{twiddle})
requires $147457 \times (14 \times 2 + 2 \times 3 -(14+2)) = 3
\times 10^6$ additions, which is $\frac{147457-1}{14 \times 2 + 2
\times 3 -(14+2)} \approx 8 \times 10^3$ times better.
\\
In the second example $\mathbb{F}_{786433}$, our FFT (\ref{twiddle})
will perform the multiplications $\frac{2^{18} \times 3}{18 \times 1
+3 } \approx 4 \times 10^4$ times faster than the DFT. And the
additions will similarly be performed $\frac{786433-1}{18 \times 2 +
3 -(18+1)} \approx 4 \times 10^4$ faster than the DFT.

A FFT calculated in for instance 1 second, would then take roughly
10 hours as a DFT.

\section{The elements of order n}
In our FFT (\ref{twiddle}) over $\mathbb{F}_p$ an element $\omega$
of order $n \mid p-1$ appears. Usually we will choose $n=p-1$, and
then $\omega$ will be a generator of $\mathbb{F}_p$. Such a
generator will normally be easy to find: according to Lagrange's
theorem in a finite group the order of any element will be a divisor
in the order of the group. Therefore an element $a$ is a generator
of the multiplicative subgroup of $\mathbb{F}_p$ with $n$ elements
iff
\[
a^{n/r} \neq 1 \ \mathrm{mod} \ p \quad \mathrm{for \ every \ prime
\ factor}\ r  \ \mathrm{of} \ n.
\]

A probabilistic algorithm to determine the smallest possible
generator of the multiplicative subgroup of $\mathbb{F}_{p}$ with
$n$ elements goes like this:

\begin{alg}
      \ \\
\textbf{Input}: $n \mid p-1$
\begin{enumerate}
\item Prime factorize $n$
\item Choose the smallest integer $a$ from the set $\{2,3, \ldots ,
n\}$
\item For every primefactor $r$ of n calculate $a^{n/r}$.
\item If this quantity is different from $1$ for all primefactors
$r$ of $n$, then stop. Else repeat step $2$ and $3$ for the lowest
values of $a \in \{2, 3, \ldots , n \} $  until this happens. Then
stop.
\end{enumerate}
\textbf{output:} The  last  value  of  $a$.
\end{alg}
\textbf{Comments on the algorithm:} For practical puposes $n <
2^{30}$ , and then the prime factorization of $n$ will not be
computationally difficult. The algorithm will allways find a
generator $\omega$, as we know that it exists. If the prime
factorization of $n$ is $n = p^{\nu_1}_1 p^{\nu_2}_2 \cdots
p^{\nu_u}_u$ then the numbers of generators of the multiplicative
subgroup of $\mathbb{F}_p$ will be
$\varphi(n)=n(1-\frac{1}{p_1})(1-\frac{1}{p_2}) \ldots
(1-\frac{1}{p_u})$ (see e.g. \cite{kob}). Hence the possibility of a
random $a \in \{2,3, \ldots , n\}$ being a generator $\omega$ equals
$(1-\frac{1}{p_1})(1-\frac{1}{p_2}) \ldots (1-\frac{1}{p_u})$. In
both the earlier mentioned examples $\mathbb{F}_{147457}$ and
$\mathbb{F}_{786433}$ this probability is
$(1-\frac{1}{2})(1-\frac{1}{3})= \frac{1}{3}$

\section{A list of suitable choices of primes p}

At the end of this article I will print a list of primes
$2^{16}<p<2^{21}$ where the only primefactors in $p-1$ are $2$ and
$3$. For these primes the FFT algorithms (\ref{recursive}) and
(\ref{twiddle}) treated here will be especially efficient. For
$n=p-1=2^{\nu _1} 3^{\nu _2}$ the number of multiplications in
algorithm (\ref{recursive}) will be $(p-1)(2 \nu _1 + 3 \nu _2)$
which can be reduced to $(p-1)(\nu _1 + 3 \nu _2)$ using algorithm
(\ref{twiddle}). The number of additions will for both FFT
algorithms be $( p -1)(2 \nu _1 + 3 \nu _2 - (\nu_1 + \nu_2)) =
(p-1)(\nu_1 + 2 \nu_2)$.

As we see, the table starts with the biggest Fermat -- number
$2^{2^n} + 1$ known to be a prime. For nearly half of the shown
numbers $p-1$ a generator $ \omega $ of $\mathbb{F}_p$ is $ 5 = 2^2
+ 1$, a nice number to multiply with in base $2$. \\
The factoring was implemented with the math -- program Maple on my
mobile PC. \\

\begin{center}
\begin{tabular}{|r|c|r|}
  \hline
  Prime $p$ & Factorization of $p-1$ & Generator $\omega$ \\
  \hline
    65537& $2^{16}$ & 3 \\
  139969 & $2^6 \times 3^7$ & 13 \\
  147457 & $2^{14} \times 3^2$ & 10 \\
  209953 & $2^5 \times 3^8$ & 10 \\
  331777 & $2^{12} \times 3^4$ & 5 \\
  472393 & $2^3 \times 3^{10}$ & 5 \\
  629857 & $2^5 \times 3^9$ & 5 \\
  746497 & $2^{10} \times 3^6$ & 5 \\
  786433 & $2^{18} \times 3$ & 10 \\
  839809 & $2^7 \times 3^8$ & 7 \\
  995329 & $2^{12} \times 3^5$ & 7 \\
  1179649 & $2^{17} \times 3^2$ & 19 \\
  1492993 & $2^{11} \times 3^6$ & 7 \\
  1769473 & $2^{16} \times 3^3$ & 5 \\
  1990657 & $2^{13} \times 3^5$ & 5 \\
  \hline
\end{tabular}

\end{center}

\ \\
\ \\

\ \\

\large{\textbf{Perspective:}}
\normalsize \\
The number of simple factorizations like those above seems not to
stop when the primes grow even bigger. Here are two examples:

For the prime $p = 113246209$ the factorization of $p-1$ is
$2^{22}\times 3^3$.

For the prime $p = 725594113$ the factorization of $p-1$ is
$2^{12}\times 3^{11}$.

\pagebreak

\end{document}